\newtheorem{proposition}{Proposition}[section]
\newtheorem{lemma}[proposition]{Lemma}
\newtheorem{theorem}[proposition]{Theorem}
\theoremstyle{definition}
\newtheorem{definition}[proposition]{Definition}
\numberwithin{equation}{section}
\newcommand{\selabel}[1]{\label{se:#1}}
\def\<{\leqslant}
\def\>{\geqslant}
\def\a{\alpha}
\def\b{\beta}
\def\d{\delta}
\def\ol{\overline}
\def\t{\triangle}
\def\e{\varepsilon}
\def\la{\lambda}
\def\La{\Lambda}
\def\ot{\otimes}
\def\ra{\rightarrow}
\date{}
\begin{document}
\title{ The structures of Hopf $\ast$-algebra on Radford algebras}
\author{Hassan Suleman Esmael MOHAMMED}
\address{ School of Mathematical Science, Yangzhou University,
Yangzhou 225002, China}
\email{esmailhassan313@yahoo.com}
\author{Hui-Xiang Chen}
\address{School of Mathematical Science, Yangzhou University,
Yangzhou 225002, China}
\email{hxchen@yzu.edu.cn}
\thanks{2010 {\it Mathematics Subject Classification}. 16G99, 16T05}

\keywords{antilinear map,$\ast$-Structure, Hopf $\ast$-algebra }
\begin{abstract}
We investigate the structures of Hopf $\ast$-algebra on the Radford algebras over $\mathbb {C}$.
All the $*$-structures on $H$ are explicitly given. Moreover, these Hopf $*$-algebra structures
are classified up to equivalence.
\end{abstract}
\maketitle
\section{\bf Introduction}\selabel{1}
Woronowicz studied compact matrix pseudogroup in \cite{Woronowicz1},
which is a generalization of compact matrix group. Using the language of $C^*$-algebra,
Woronowicz described compact matrix pseudogroups as $C^*$-algebras endowed with
some comultiplications. This induces the concept of Hopf $*$-algebras.
In \cite{Woronowicz1, Woronowicz2, Woronowicz3}, Woronowicz exhibited Hopf $*$-algebra structures on quantum
groups in the framework of $C^*$-algebras. It was shown that $GL_q(2)$, $SL_q(2)$ and
$U_q(sl(2))$ are Hopf $*$-algebras, see \cite{Ka, MMNNSU}.
Van Deale \cite{VD} studied the Harr measure on a compact quantum group.
Podle\'{s} \cite{PO} studied coquasitriangular Hopf $\ast$-algebras.
Tucker-Simmons \cite{TM} studied the $*$-structure of module algebras over a Hopf $\ast$-algebra.
Recently, we investigated the Hopf $\ast$-algebra structures on $H(1,q)$ over ${\mathbb C}$
and classified these $*$-structures up to the equivalence \cite {MLiChen}.

Radford \cite{Rad1} constructed  for every integer $n>1$ a finite dimensional unimodular Hopf algebra
with antipode of order 2n and proved that for every even integer there is a finite dimensional Hopf algebra $H$.
For more details, the reader is directed to \cite{Loren, Rad1, Rad2}.

In this paper, we study the structures of Hopf $\ast$-algebra on the Radford algebra $H$ over
the complex number field ${\mathbb C}$. This paper is organized as follow.
In Section 2, we recall some basic notions about the Hopf $\ast$-algebra,
and the Radford algebra $H$. In Section 3, we first describe all structures of Hopf $\ast$-algebra on Radford algebra.
It is shown that when $n>2$, a Hopf $*$-algebra structure on $H$ is uniquely determined by a pair $(\a, \b)$
of elements in $\mathbb C$ with $|\a|=|\b|=1$, and that when $n=2$, a Hopf $*$-algebra structure on $H$
is uniquely determined by a $2\times 2$-matrix $A$ over $\mathbb C$ with $\ol{A}A=I_2$.
Then we classify the Hopf $*$-algebra structures up to equivalence.
It is shown that any two $*$-structures on $H$ are equivalent when $n>2$.
When $n=2$, the two $*$-structures determined by two matrices $A$ and $B$, respectively,
are equivalent if and only if there exists an invertible $2\times 2$-matrix $\La$ over $\mathbb C$
such that $A\ol{\La}=\La B$.

\section{\bf Preliminaries}\selabel{2}

Throughout, let $\mathbb {Z}$, $\mathbb {N}$, $\mathbb R$ and $\mathbb C$ denotes the all integers,
all nonnegative integers, the field of real numbers, and the field of complex numbers, respectively.
Let $i\in \mathbb C$ be the imaginary unit.
For any $\la\in\mathbb{C}$, let $\ol{\la}$ denote the conjugate complex number of $\la$,
and let $|\la|$ denote the norm of $\la$. For a Hopf algebra $H$, we use $\triangle$, $\varepsilon$ and $S$
denote the comultiplication, the counit, and the antipode of $H$ as usual.
For the theory of quantum groups and Hopf algebras, we refer to \cite{Ka, Maj, Mon, Rad2, SM}.
Let $G(H)$ denote the set of group-like elements in a Hopf algebra $H$, which is a group.

Let $V$ and $W$ be vector spaces over $\mathbb C$.
A mapping $\psi: V\rightarrow W$ is said to be conjugate-linear (or antilinear) if
$$\psi(\la_{1} v_1+\la_{2} v_2)=\ol{\la_{1}}\psi(v_1)+\ol{\la_{2}}\psi(v_2),
\ \forall v_1, v_2\in V, \ \forall\la_1, \la_2\in\mathbb{C}.$$
Let $A$ and $B$ be $\mathbb C$-algebras.
A conjugate-linear map $\psi:{A}\rightarrow {B}$ (resp., $A$) is said to be a conjugate-linear algebra map
(resp., a conjugate-linear algebra endomorphism) if
$$\psi(aa')=\psi(a)\psi(a'), \ \psi(1)=1, \ \forall {a},{a'}\in A,$$
and $\psi$ is said to be a conjugate-linear antialgebra  map (resp., a conjugate-linear antialgebra endomorphism) if
$$\psi(aa')=\psi(a')\psi(a), \ \psi(1)=1, \ \forall {a},{a'}\in A.$$

Let $C$ and $D$ be two coalgebras over $\mathbb C$.
A conjugate-linear map $\psi: C\rightarrow D$ (resp., $C$) is said to be a conjugate-linear coalgebra map
(resp., a conjugate-linear coalgebra endomorphism) if
$$\sum\psi(c)_1\ot \psi(c)_2=\sum \psi(c_1)\ot \psi(c_2), \ \e(\psi(c))=\ol{\e(c)}, \ \forall c\in C,$$
and $\psi$ is said to be a conjugate-linear anticoalgebra map (resp., a conjugate-linear anticoalgebra endomorphism) if
$$\sum \psi(c)_1\ot \psi(c)_2=\sum \psi(c_2)\ot \psi(c_1), \ \e(\psi(c))=\ol{\e(c)}, \ \forall c\in C.$$

\begin{definition}\label{2.1}
Let $H$ be a Hopf algebra over $\mathbb C$. A $\ast$-structure on $H$ is a conjugate-linear map $\ast: H\ra H$
such that the following conditions are satisfied:
$$\begin{array}{ll}
(h^\ast)^\ast=h,& (hl)^\ast={l}^\ast {h}^\ast,\\
\sum{(h^\ast)_1\ot (h^\ast)_2}=\sum{(h_1)^\ast \ot (h_2)^\ast},
&S(S(h^*)^*)=h,\\
\end{array}$$
where $h,l\in H$. If $H$ is equipped with a $\ast$-structure,
then we call $H$ a Hopf $\ast$-algebra.
Two $\ast$-structures ${\ast'}$ and ${\ast''}$ on $H$ are said to be equivalent
if there exists a Hopf algebra automorphism $\psi$ of $H$ such that
$\psi(h^{*'})=\psi(h)^{*''}$ for all $h\in H$.
\end{definition}

Let $H$ be a Hopf $\ast$-algebra. Then it is not difficult to check that
$${\e(h^{\ast})}=\ol{\e(h)}, \ \forall h{\in H}.$$
Hence, the map $\ast$ is an antilinear coalgebra endomorphism of $H$.
$\mathbb{C}=\mathbb{C}1_H$ is a subalgebra of $H$. In this case, $\la^*=\ol{\la}$
for any $\la\in\mathbb C\subseteq H$.

Fix a positive integer $n>1$ and let $\omega\in\mathbb C$ be a root of unity of order $n$.
The Radford algebra $H$ over $\mathbb C$ is generated, as a $\mathbb C$-algebra, by $g$, $x$ and $y$
subject to the relations:
$$\begin{array}{lllll}
g^{n}=1,& x^{n}=y^{n}=0, & xg=\omega gx,& gy=\omega yg,& xy=\omega yx.\\
\end{array}$$
Then $H$ is a Hopf algebra with the coalgebra structure and the antipode given by
$$\begin{array}{lll}
\t(g)=g\otimes g, & \e(g)=1, & S(g)=g^{n-1},\\
\t(x)=x\otimes g+1\otimes x, & \e(x)=0, & S(x)=-xg^{n-1},\\
\t(y)=y\otimes g+1\otimes y,& \e(y)=0, & S(y)=-yg^{n-1}.
\end{array}
$$
Note that $H$ has a canonical basis $\{g^lx^ry^s|0\< l, r, s<n\}$ over $\mathbb C$.
For the details, the reader is directed to \cite{Loren, Rad1, Rad2}.

\section{\bf  The structres of Hopf $\ast$-algebras on $H$}\label{4}

Throughout this section, let $H$ be the Radford algebra over $\mathbb C$ described in the last section.
In this section, we study the $*$-structures on the Hopf algebra $H$.
Let $Z(H)$ denote the center of $H$. Note that $H$ is generated, as an algebra over $\mathbb R$, by ${g,x,y}$,
and $i$ subject to the relations given in the last section together with ${i^2}=-1$ and $i\in Z(H)$.
In the following, let ${H^{op}}$ denote the opposite algebra of $H$. For any $h, l\in H^{op}$,
let $h\cdot l$ denote the product of $h$ and $l$ in $H^{op}$, i.e., $h\cdot l=lh$.

\begin{lemma}\label{3.1}
Let $\a, \b\in\mathbb{C}$ with $|\a|=|\b|=1$. Then
$H$ is a Hopf $\ast$-algebra with the $\ast$-structure given by
$$g^*=g, \ x^*=\a x, \ y^*=\b y,$$
\end{lemma}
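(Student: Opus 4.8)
The plan is to verify directly that the prescribed map extends to a well-defined conjugate-linear map on all of $H$ and then check the four axioms of \deref{2.1}. First I would define $\ast$ on the generators by $g^*=g$, $x^*=\a x$, $y^*=\b y$, extend it conjugate-linearly and anti-multiplicatively, and confirm that it respects the defining relations of $H$; this is where $|\a|=|\b|=1$ enters. For instance, applying $\ast$ to $g^n=1$ gives $(g^*)^n=(g)^n=1$ since $\ast$ is conjugate-linear and $1^*=1$; applying $\ast$ to $xg=\om gx$ we need $(xg)^*=(g^*)(x^*)=\a gx$ to equal $\ol{\om}(gx)^*$-type consistency — more precisely, since $\ast$ is an antialgebra map, $(xg)^*=g^*x^*=\a gx$ and $(\om gx)^*=\ol{\om}\,(gx)^*=\ol{\om}\,x^*g^*=\ol{\om}\a xg=\ol{\om}\a\om gx=\a gx$, using $xg=\om gx$ once more and $|\om|=1$; the relations $gy=\om yg$, $xy=\om yx$ are handled the same way, and $x^n=y^n=0$ pose no problem. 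Thus $\ast$ is a well-defined conjugate-linear antialgebra endomorphism of $H$.

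Next I would check the involution axiom $(h^\ast)^\ast=h$. It suffices to check it on generators: $(g^*)^*=g^*=g$, $(x^*)^*=(\a x)^*=\ol{\a}\,x^*=\ol{\a}\a x=|\a|^2 x=x$, and likewise $(y^*)^*=y$; since $h\mapsto (h^*)^*$ is a (linear) algebra endomorphism agreeing with the identity on algebra generators, it is the identity. For the comultiplication-compatibility axiom $\sum(h^*)_1\ot(h^*)_2=\sum(h_1)^*\ot(h_2)^*$, I would again reduce to generators: on $g$ both sides equal $g\ot g$; on $x$ the left side is $\t(\a x)=\a x\ot g+\a\,1\ot x$, while the right side is $(x\ot g)^{*\ot*}+(1\ot x)^{*\ot *}=\a x\ot g+1\ot\a x$, which match; similarly for $y$. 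One must note that both sides, as maps $H\to H\ot H$, are conjugate-linear and anti-multiplicative (using that $\t$ is an algebra map and $\ast$ an antialgebra map, so the relevant composites are), hence agreeing on generators forces agreement everywhere.

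Finally, the axiom $S(S(h^*)^*)=h$. Here I would compute $S\circ\ast$ on the generators: $S(g^*)=S(g)=g^{n-1}$, so $(S(g^*))^*=g^{n-1}$ and $S((S(g^*))^*)=S(g^{n-1})=g$; for $x$, $S(x^*)=S(\a x)=\a S(x)=-\a xg^{n-1}$, then $(S(x^*))^*=-\ol{\a}\,(xg^{n-1})^*=-\ol{\a}\,(g^{n-1})^*x^*=-\ol{\a}\,g^{n-1}\a x=-g^{n-1}x$, and applying $S$ gives $-S(x)S(g^{n-1})=xg^{n-1}\cdot g=xg^{n}=x$ (using $S$ anti-multiplicative); the computation for $y$ is identical. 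Since $h\mapsto S(S(h^*)^*)$ is again a composite of conjugate-linear (anti)algebra maps — two antilinear antialgebra maps and two antipodes — one checks it is a linear algebra endomorphism, so agreement on $g,x,y$ suffices. I expect the only mildly delicate point to be bookkeeping the ``linear vs.\ conjugate-linear'' and ``algebra vs.\ antialgebra'' parity of each composite map so as to legitimately reduce every identity to the three generators; none of the individual computations is hard, but stating the reduction cleanly is the crux.
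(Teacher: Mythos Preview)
Your proposal is correct and follows essentially the same approach as the paper's proof: verify that the prescription extends to a well-defined conjugate-linear antialgebra endomorphism by checking the defining relations, then check the involution, comultiplication, and antipode axioms on the generators $g,x,y$, appealing each time to the fact that the relevant composite is a $\mathbb{C}$-algebra endomorphism. One small correction: the condition $|\a|=|\b|=1$ is not actually needed for the relations check (your own computation of $(xg)^*=(\om gx)^*$ confirms this), but rather enters precisely where you later use it, in the involution step $(x^*)^*=|\a|^2x$.
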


\begin{proof} We first prove that the relations given in the lemma together with ${i^\ast}={-i}$
give rise to a real antialgebra endomorphism of $H$, i.e., a real algebra map from $H$ to $H^{op}$.
Since $|\omega|$=$1$, we have $\omega^\ast=\ol{\omega}=\omega^{-1}$.
Hence in $H^{op}$, we have
$(g^*)^n=g^n=1$,
$(x^\ast)^n= (\a x)^n=0$,
$x^\ast\cdot g^\ast=\a x\cdot g=\a gx=\a \omega^{-1}xg=\a\omega^{-1}g\cdot x=\omega^\ast g^\ast\cdot x^\ast$
and $x^\ast\cdot y^\ast=\a\b x\cdot y=\a\b yx=\a\b\omega^{-1}xy=\a\b\omega^{-1}y \cdot x=\omega^\ast y^\ast\cdot x^\ast$.
Similarly, one can check that
$(y^*)^n=0$
and
$g^\ast\cdot y^\ast=\omega^{\ast}y^\ast\cdot g^\ast$.
We also have $i^\ast=-i\in Z(H^{op})$ and $(i^\ast)^2=(-i)^2=-1$.
This shows that the relations given in the lemma together with ${i^\ast=-i}$ determine a real algebra map
$\ast: H\ra {H^{op}}$.
Then it follows that $\ast$  is a conjugate-linear antialgebra endomorphism of $H$.
Hence the composition $\ast\circ\ast$  is a complex algebra endomorphism of $H$.
It is not difficult to check that $(h^\ast)^\ast=h$, $\forall h\in\{g,x,y\}$,
and so $(h^\ast)^\ast=h$, $\forall h\in{H}$. Thus, $\ast$ is an involution of $H$.
Note that both $\t\circ\ast$ and  $(\ast\ot\ast)\circ \t$  are conjugate-linear antialgebra maps from
$H$ to $H\otimes H$. It is easy check that $\t(h^\ast)=\sum(h_1)^\ast\ot(h_2)^\ast$ for any $h\in\{g,x,y\}$.
It follows that $\t(h^\ast)=\sum(h_1)^\ast\ot (h_2)^\ast$ for all $h\in H$.
Similarly, we have $\e(h^*)=\overline{\e(h)}$ for all $h\in H$.
Finally, since $S$ is a complex antialgebra endomorphism of $H$ and $\ast$ is a conjugate-linear antialgebra endomorphism of $H$,
the map $H\rightarrow H$, $h\mapsto{S(S(h^\ast)^\ast)}$
is a complex algebra endomorphism of $H$. Now we have

$$\begin{array}{rl}
S(S(g^\ast)^*)&=S(S(g)^\ast))=S((g^{-1})^\ast)=S((g^{\ast})^{-1})=S(g^{-1})=g,\\
S(S(x^\ast)^\ast)&=S(S(\a x)^*)=S((-\a xg^{n-1})^\ast)=S(-\ol{\a}(g^{n-1})^\ast x^{\ast})\\
&=S(-\ol{\a}\a g^{n-1}x)=-S(x)S(g^{n-1})=xg^{n-1}g=x,\\
\end{array}$$
and similarly $S(S(y^\ast)^\ast)=y$.
It follows that $S(S(h^\ast)^\ast)=h$ for all $h\in{H}$.
\end{proof}

Let $M_2(\mathbb{C})$ be the matrix algebra of all $2\times 2$-matrices over $\mathbb C$.
For a matrix
$$A=\left(
  \begin{array}{cc}
    \a_{11} & \a_{12} \\
    \a_{21} & \a_{22}  \\
  \end{array}
\right)\in M_2(\mathbb C),$$
let
$$\ol{A}=\left(
  \begin{array}{cc}
    \ol{\a_{11}} & \ol{\a_{12}}  \\
    \ol{\a_{21}} & \ol{\a_{22}} \\
  \end{array}
\right)\in M_2(\mathbb C).$$

\begin{lemma}\label{3.2} Assume that $n=2$ and let
$A=\left(
       \begin{array}{cc}
         \a_{11} & \a_{12} \\
         \a_{21} & \a_{22} \\
       \end{array}
     \right)
\in M_2(\mathbb{C})$ with
$\ol{A}A=I_2$, the $2\times 2$ identity matrix.
Then $H$ is a Hopf $\ast$-algebra with the $\ast$-structure given by
$$g^*=g, \ x^*=\a_{11}x+\a_{12}y, \ y^*=\a_{21}x+\a_{22}y.$$
\end{lemma}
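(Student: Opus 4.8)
The plan is to follow the proof of \leref{3.1} essentially verbatim; the only new features are that $x^*$ and $y^*$ are now linear combinations of $x$ and $y$, and that the hypothesis $\ol{A}A=I_2$ is exactly what is needed to make $\ast$ an involution. Since $n=2$ we have $\omega=\omega^{-1}=\ol{\omega}=-1$, so in $H$ the generators $g,x,y$ pairwise anticommute, $g^2=1$ and $x^2=y^2=0$; this keeps every computation short.

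First I would show that the assignment $g^*=g$, $x^*=\a_{11}x+\a_{12}y$, $y^*=\a_{21}x+\a_{22}y$, together with $i^*=-i$, defines a real algebra map $\ast\colon H\ra H^{op}$, hence a conjugate-linear antialgebra endomorphism of $H$ by the universal property of the presentation, exactly as in \leref{3.1}. For this it suffices to check that the proposed images of $g,x,y,i$ satisfy in $H^{op}$ the defining relations of $H$. The relations $(g^*)^2=1$, $i^*\in Z(H^{op})$, $(i^*)^2=-1$ are immediate, and $(x^*)^2=(y^*)^2=0$ follows by expanding and using $x^2=y^2=0$, $xy=-yx$. Because $\omega^*=-1$, the relations involving $\omega$ reduce to the statements that $g$ anticommutes in $H$ with $x^*$ and with $y^*$ (clear, since $g$ anticommutes with $x$ and $y$) and that $x^*$ and $y^*$ anticommute in $H$; the latter follows from $x^*y^*=\det(A)\,xy$ and $y^*x^*=-\det(A)\,xy$, again via $x^2=y^2=0$ and $xy=-yx$. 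Note that this step does not use $\ol{A}A=I_2$.

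Next I would verify the axioms of \deref{2.1}. Since $\ast$ is a conjugate-linear antialgebra map, $\ast\circ\ast$ is a complex algebra endomorphism of $H$, so $(h^*)^*=h$ for all $h\in H$ follows once it is checked on $g,x,y$. One has $(g^*)^*=g$, while a direct expansion gives
$$(x^*)^*=(\ol{A}A)_{11}\,x+(\ol{A}A)_{12}\,y,\qquad (y^*)^*=(\ol{A}A)_{21}\,x+(\ol{A}A)_{22}\,y,$$
so that $\ol{A}A=I_2$ is precisely what yields $(x^*)^*=x$ and $(y^*)^*=y$; thus $\ast$ is an involution. For the coalgebra compatibility, both $\t\circ\ast$ and $(\ast\ot\ast)\circ\t$ are conjugate-linear antialgebra maps $H\ra H\ot H$, and they agree on $g,x,y$: using linearity of $\t$ one gets $\t(x^*)=x^*\ot g+1\ot x^*=\sum(x_1)^*\ot(x_2)^*$, and similarly for $g$ and $y$; hence $\t(h^*)=\sum(h_1)^*\ot(h_2)^*$ for all $h\in H$. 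The identity $\e(h^*)=\ol{\e(h)}$ follows in the same way by comparing two conjugate-linear algebra maps $H\ra\mathbb C$ on generators, using $\e(x^*)=\a_{11}\e(x)+\a_{12}\e(y)=0$. Finally, $h\mapsto S(S(h^*)^*)$ is a complex algebra endomorphism of $H$ (as $S$ is a complex antialgebra map and $\ast$ a conjugate-linear antialgebra map), so it suffices to evaluate it on generators: using $n=2$, linearity of $S$, and $S(x)=-xg$, $S(y)=-yg$, one computes $S(x^*)=-x^*g$, then $(S(x^*))^*=-g^*(x^*)^*=-gx$ by the involution property just established, and finally $S(-gx)=-S(x)S(g)=xg^2=x$; the cases of $g$ and $y$ are analogous. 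Hence $S(S(h^*)^*)=h$ for all $h\in H$, and $\ast$ is a $\ast$-structure on $H$.

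All the computations are routine; the only things to keep straight are the bookkeeping in $H^{op}$ (where $\omega^*=-1$ forces the anticommutation relations) and the observation that $\ol{A}A=I_2$ is needed only — but is exactly needed — for the axiom $(h^*)^*=h$. As in \leref{3.1}, I would also isolate the small transfer facts that a composite of (conjugate-linear) antialgebra maps is an algebra map and that $\ast\ot\ast$ is an antialgebra endomorphism of $H\ot H$, since these are what reduce every verification to the three generators $g,x,y$.
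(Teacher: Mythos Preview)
Your proof is correct and follows essentially the same approach as the paper's own proof: verify that the assignment defines a real algebra map $H\ra H^{op}$ by checking the defining relations, then check the four $\ast$-algebra axioms on generators using that the relevant composites are algebra or antialgebra maps. Your packaging is slightly more efficient (writing $x^*y^*=\det(A)\,xy$ and invoking the already-established involution in the $S(S(h^*)^*)$ computation rather than expanding with $\ol{A}A=I_2$ again), but these are cosmetic differences, not a different argument.
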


\begin{proof} Assume that $n=2$. Then $\omega=-1$.
We first prove that the relations given in the lemma together with ${i^\ast}={-i}$
give rise to a real antialgebra endomorphism of $H$, i.e., a real algebra map from $H$ to $H^{op}$.
In $H^{op}$, we have
$(g^{\ast})^2=g^2=1$,
$(x^*)^2= (\a_{11}x+\a_{12}y)^2=\a_{11}^2x^2+\a_{11}\a_{12}xy+\a_{12}\a_{11}yx+\a_{12}^2y^2=0$
and
$x^*\cdot g^*=(\a_{11}x+\a_{12}y)\cdot g
=\a_{11}gx+\a_{12}gy=-\a_{11}xg-\a_{12}yg
=-g\cdot(\a_{11}x+\a_{12}y)=- g^*\cdot x^*$.
We also have $x^*\cdot y^*=(\a_{21}x+\a_{22}y)(\a_{11}x+\a_{12}y)
=\a_{21}\a_{11}x^2+\a_{21}\a_{12}xy+\a_{22}\a_{11}yx+\a_{22}\a_{12}y^2
=(\a_{21}\a_{12}-\a_{22}\a_{11})xy$ and
$y^*\cdot x^*=(\a_{11}x+\a_{12}y)(\a_{21}x+\a_{22}y)
=\a_{11}\a_{21}x^2+\a_{11}\a_{22}xy+\a_{12}\a_{21}yx+\a_{12}\a_{22}y^2
=(\a_{11}\a_{22}-\a_{12}\a_{21})xy$, which implies that $x^*\cdot y^*=-y^*\cdot x^*$.
Similarly, one can check that
$(y^*)^2=0$
and
$g^\ast\cdot y^\ast=-y^\ast\cdot g^\ast$.
We also have $i^\ast=-i\in Z(H^{op})$ and $(i^\ast)^2=(-i)^2=-1$.
This shows that the relations given in the lemma together with ${i^\ast=-i}$ determine a real algebra map
$\ast: H\ra {H^{op}}$.
Then it follows that $\ast$  is a conjugate-linear antialgebra endomorphism of $H$.
Hence the composition $\ast\circ\ast$  is a complex algebra endomorphism of $H$.
Clearly, $(g^*)^*=g$. Since $\ol{A}A=I_2$, $\ol{\a_{i1}}\a_{1j}+\ol{\a_{i2}}\a_{2j}=\d_{ij}$
for $1\<i, j\<2$.
Hence we have
$$\begin{array}{rl}
(x^*)^*&=(\a_{11}x+\a_{12}y)^*=\ol{\a_{11}}x^*+\ol{\a_{12}}y^*\\
&=\ol{\a_{11}}(\a_{11}x+\a_{12}y)+\ol{\a_{12}}(\a_{21}x+\a_{22}y)\\
&=(\ol{\a_{11}}\a_{11}+\ol{\a_{12}}\a_{21})x+(\ol{\a_{11}}\a_{12}+\ol{\a_{12}}\a_{22})y=x.\\
\end{array}$$
Similarly, we also have $(y^*)^*=y$. It follows that $(h^*)^*=h$ for all $h\in{H}$.
Thus, $\ast$ is an involution of $H$.
Note that both $\t\circ\ast$ and  $(\ast\ot\ast)\circ \t$  are conjugate-linear antialgebra maps from
$H$ to $H\otimes H$. It is easy check that $\t(h^\ast)=\sum(h_1)^\ast\ot(h_2)^\ast$ for any $h\in\{g,x,y\}$.
It follows that $\t(h^\ast)=\sum(h_1)^\ast\ot (h_2)^\ast$ for all $h\in H$.
Similarly, we have $\e(h^*)=\overline{\e(h)}$ for all $h\in H$.
Finally, since $S$ is a complex antialgebra endomorphism of $H$ and $\ast$ is a conjugate-linear antialgebra endomorphism of $H$,
the map $H\rightarrow H$, $h\mapsto{S(S(h^\ast)^\ast)}$
is a complex algebra endomorphism of $H$. Now, we have
$$\begin{array}{rl}
S(S(g^*)^*)&=S(S(g)^*))=S(g^*)=S(g)=g,\\
S(S(x^*)^*)&=S(S(\a_{11}x+\a_{12}y)^*)=S((-\a_{11}xg-\a_{12}yg)^*)\\
&=S((\a_{11}gx+\a_{12}gy)^*)=S(\ol{\a_{11}}x^*g^*+\ol{\a_{12}}y^*g^*)\\
&=S(\ol{\a_{11}}(\a_{11}x+\a_{12}y)g+\ol{\a_{12}}(\a_{21}x+\a_{22}y)g)\\
&=S(g)S((\ol{\a_{11}}\a_{11}+\ol{\a_{12}}\a_{21})x+(\ol{\a_{11}}\a_{12}+\ol{\a_{12}}\a_{22})y)\\
&=S(g)S(x)=g(-xg)=x,\\
\end{array}$$
and similarly $S(S(y^\ast)^\ast)=y$.
It follows that $S(S(h^\ast)^\ast)=h$ for all $h\in{H}$.
\end{proof}

The following proposition follows similarly to \cite[Lemma 2.7]{Ch1}.

\begin{proposition}\label{3.3}
For any $r,s\in{\mathbb N}$ and $l\in{\mathbb Z}$,\\
$$\
\t(y^rx^sg^l)=\sum_{i=0}^{r}\sum_{j=0}^{s}\omega^{-(r-i)j}\binom{r}{i}_{\omega}\binom{s}{j}_{\omega^{-1}}
y^{r-i}x^{s-j}g^{l}\ot y^ix^{j}g^{l+s-j+r-i}.
$$
\end{proposition}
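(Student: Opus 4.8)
The plan is to use that $\t$ is an algebra map, so that $\t(y^rx^sg^l)=\t(y)^r\,\t(x)^s\,\t(g)^l$ in $H\ot H$, and then to expand $\t(y)^r$ and $\t(x)^s$ by a $q$-binomial theorem. First I would record the standard fact that in any associative algebra, two elements $A,B$ with $BA=qAB$ (for a scalar $q$) satisfy $(A+B)^m=\sum_{k=0}^m\binom{m}{k}_q A^kB^{m-k}$ for every $m\ge 0$; this is a one-line induction on $m$ using the $q$-Pascal identity for $\binom{\cdot}{\cdot}_q$. I would also note the elementary identity $\binom{m}{k}_{q^{-1}}=q^{-k(m-k)}\binom{m}{k}_q$.

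Applying this to $\t(y)=y\ot g+1\ot y$: with $A=y\ot g$ and $B=1\ot y$, the relation $gy=\omega yg$ gives $AB=\omega BA$, i.e. $BA=\omega^{-1}AB$, so $\t(y)^r=\sum_{i=0}^r\binom{r}{i}_{\omega^{-1}}(y\ot g)^{r-i}(1\ot y)^i$. Since $(y\ot g)^{r-i}(1\ot y)^i=y^{r-i}\ot g^{r-i}y^i=\omega^{(r-i)i}\,y^{r-i}\ot y^ig^{r-i}$, the identity above collapses $\binom{r}{i}_{\omega^{-1}}\omega^{(r-i)i}$ to $\binom{r}{i}_\omega$, and hence $\t(y)^r=\sum_{i=0}^r\binom{r}{i}_\omega\,y^{r-i}\ot y^ig^{r-i}$. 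In the same way, from $xg=\omega gx$ one gets $(x\ot g)(1\ot x)=\omega^{-1}(1\ot x)(x\ot g)$ and $g^{s-j}x^j=\omega^{-(s-j)j}x^jg^{s-j}$, which yields $\t(x)^s=\sum_{j=0}^s\binom{s}{j}_{\omega^{-1}}\,x^{s-j}\ot x^jg^{s-j}$. (Nothing special about $\omega$ being a root of unity is used here: these are identities valid over any commutative ring; the root-of-unity condition merely forces certain $\binom{r}{i}_\omega$ to vanish, in accordance with $x^n=y^n=0$.)

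It then remains to multiply the three expansions. Using $\t(g)^l=g^l\ot g^l$ and, for a single pair of cross-terms, $(y^{r-i}\ot y^ig^{r-i})(x^{s-j}\ot x^jg^{s-j})=y^{r-i}x^{s-j}\ot y^ig^{r-i}x^jg^{s-j}$, one moves $g^{r-i}$ past $x^j$ via $g^{r-i}x^j=\omega^{-(r-i)j}x^jg^{r-i}$ and then multiplies on the right by $g^l\ot g^l$; the result is exactly the summand $\omega^{-(r-i)j}\binom{r}{i}_\omega\binom{s}{j}_{\omega^{-1}}\,y^{r-i}x^{s-j}g^l\ot y^ix^jg^{l+s-j+r-i}$ appearing in the statement. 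I expect the only delicate point to be the bookkeeping of powers of $\omega$ — in particular keeping track of whether $\omega$ or $\omega^{-1}$ governs each $q$-commutation, and invoking $\binom{m}{k}_{q^{-1}}=q^{-k(m-k)}\binom{m}{k}_q$ at the right moments — rather than any genuine conceptual difficulty. A wholly equivalent alternative is a double induction on $r+s$, peeling off one factor $\t(y)$ or $\t(x)$ at a time and using the $q$-Pascal relation for $\binom{\cdot}{\cdot}_\omega$.
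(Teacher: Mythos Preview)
Your proposal is correct and follows essentially the same route as the paper: expand $\t(y)^r$ and $\t(x)^s$ via the $q$-binomial theorem (the paper simply cites \cite[Proposition IV.2.2]{Ka} for this), then multiply by $\t(g)^l$ and commute $g^{r-i}$ past $x^j$ to produce the factor $\omega^{-(r-i)j}$. The only difference is cosmetic: you make the intermediate conversion $\binom{r}{i}_{\omega^{-1}}\omega^{(r-i)i}=\binom{r}{i}_\omega$ explicit, whereas the paper writes the expansions of $\t(y)^r$ and $\t(x)^s$ in their final form directly.
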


\begin{proof} Since
$$
(x\ot g)(1\ot x)=\omega^{-1}(1\ot x)(x\ot g), \ (y\ot g)(1\ot y)=\omega(1\ot y)(y\ot g),
$$
it follows from \cite[Proposition IV.2.2]{Ka} that
$$\begin{array}{ll}
\t(x)^s=(1\ot x+x\ot g)^s=\sum_{j=0}^s{\binom{s}{j}}_{\omega^{-1}}{x^{s-j}}\ot{x^{j}g^{s-j}},\\
\t(y)^r=(1\ot y+y\ot g)^r=\sum_{i=0}^{r}{\binom {r}{i}}_{\omega}{y^{r-i}}\ot{y^{i}}g^{r-i}.\\
\end{array}$$
Now, since $\t$ is an algebra map, we have
$$\begin{array}{rl}
\t({y^rx^sg^l})=&\t(y)^r\t(x)^s\t(g)^l\\
=&(1\ot y+y\ot g)^r(1\ot x+x\ot g)^s(g\ot g)^l\\
=&\sum_{i=0}^{r}\sum_{j=0}^{s}\binom{r}{i}_{\omega}\binom{s}{j}_{\omega^{-1}}y^{r-i}x^{s-j}g^l\ot y^ig^{r-i}x^{j}g^{l+s-j}\\
=&\sum_{i=0}^{r}\sum_{j=0}^{s}\omega^{-(r-i)j}\binom{r}{i}_{\omega}\binom{s}{j}_{\omega^{-1}}y^{r-i}x^{s-j}g^{l}\ot y^ix^{j}g^{l+s-j+r-i}.\\
\end{array}$$
\end{proof}

Note that $\{y^rx^sg^l|0\leqslant r,s,l<n\}$ is a canonical basis of $H$ over $\mathbb {C}$. Hence,
$$
\{y^rx^sg^l\ot y^{r_1}x^{s_1}g^{l_1}|0\leqslant r, r_1, s, s_1, l, l_1<n\}.
$$
is a basis of $H\ot H$ over $\mathbb{C}$.
For an element
$$h=\sum_{0\leqslant r,s,l<n}\la_{r,s,l}y^{r}x^{s}g^{l}$$
in $H$, if  $\la_{r,s,l}\neq 0$, then we say that ${y^rx^sg^l}$ is a term of $h$.
Moreover, $r$ (resp., $s$) is called the degree of $y$ (resp., $x$) in the term ${y^rx^sg^l}$.
Similarly, for an element
$$h=\sum_{0\leqslant r,s,l,{r_{1}},{s_{1}},{l_{1}}<n}\la_{r,s,l,{r_{1}},{s_{1}},{l_{1}}}y^{r}x^{s}g^{l}\ot y^{r_{1}}x^{s_{1}}g^{l_{1}}$$
in $H\ot H$, if $\la_{r,s,l,{r_{1}},{s_{1}},{l_{1}}}\neq  0$, then we say that  ${y^{r}x^{s}g^{l}}\ot y^{r_{1}}x^{s_{1}}g^{l_{1}}$
is a term of $h$. Moreover, ${r+{r_{1}}}$ (resp., ${s+{s_{1}}}$) is called the total degree of $y$ (resp., $x$) in the term
$${y^{r}x^{s}g^{l}}\ot {y^{r_{1}}x^{s_{1}}g^{l_{1}}}.$$

\begin{lemma}\label{3.4}
$G(H)=\{g^l|0\leqslant l<n\}$.
\end{lemma}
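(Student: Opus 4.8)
The reverse inclusion $\{g^l\mid 0\le l<n\}\subseteq G(H)$ is clear, since $\t(g)=g\ot g$ and $G(H)$ is a group. For the inclusion $G(H)\subseteq\{g^l\}$, the plan is to exploit the grading $H=\bigoplus_{d=0}^{2n-2}H_d$, where $H_d$ is the span of the basis elements $y^rx^sg^l$ ($0\le l<n$) with $r+s=d$. Each defining relation of $H$ is homogeneous in this degree, so this is an algebra grading, with $H_0=\mathbb C[g]/(g^n-1)$ the group Hopf algebra of $\mathbb Z/n\mathbb Z$; and by Proposition~\ref{3.3} (equivalently, from $\t(x)=x\ot g+1\ot x$, $\t(y)=y\ot g+1\ot y$, $\t(g)=g\ot g$) the coproduct is graded, $\t(H_d)\subseteq\bigoplus_{a+b=d}H_a\ot H_b$. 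Hence, for $h\in G(H)$ written as $h=\sum_d h_d$ with $h_d\in H_d$, the identity $\t(h)=h\ot h$ decomposes, after projecting onto the summand $H_a\ot H_b$ of $H\ot H$, into
$$\t(h_{a+b})\big|_{H_a\ot H_b}=h_a\ot h_b,\qquad a,b\ge 0.$$

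First I would determine $h_0$. Since $\e$ vanishes on $H_d$ for $d\ge1$ and $\e(g^l)=1$, applying $\e\ot\mathrm{id}$ to $\t(h)=h\ot h$ gives $\e(h_0)=1$, so $h_0\ne0$; taking $a=b=0$ above, with $\t(h_0)\in H_0\ot H_0$, yields $\t(h_0)=h_0\ot h_0$, hence $h_0=g^{l_0}$ for some $0\le l_0<n$, the only group-like elements of a group algebra being the group elements. Replacing $h$ by $g^{-l_0}h$ — which is again group-like, and whose homogeneous components vanish for exactly the same degrees, since left multiplication by $g^{-l_0}$ preserves each $H_d$ — I may assume $h_0=1$.

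It then remains to show $h_d=0$ for all $d\ge1$. Suppose $d\ge1$ is minimal with $h_d\ne0$. Summing the component identities over $a+b=d$ and using $h_0=1$, $h_a=0$ for $1\le a<d$, we get $\t(h_d)=1\ot h_d+h_d\ot1$. Writing $h_d=\sum_{r+s=d,\,0\le l<n}\la_{r,s,l}y^rx^sg^l$ and reading coefficients off Proposition~\ref{3.3}: the $H_0\ot H_d$-component of $\t(h_d)$ is $\sum\la_{r,s,l}\,g^l\ot y^rx^sg^l$ (the $i=r$, $j=s$ terms), which compared with the component $\sum\la_{r,s,l}\,1\ot y^rx^sg^l$ of $1\ot h_d$ forces $\la_{r,s,l}=0$ for $l\ne0$; so $h_d=\sum_{r+s=d}\la_{r,s}y^rx^s$. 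Next, the $H_d\ot H_0$-component of $\t(y^rx^s)$ is $y^rx^s\ot g^{r+s}$ (the $i=j=0$ term), and comparing with $h_d\ot1$ forces $g^d=1$, i.e.\ $n\mid d$; since $1\le d\le 2n-2$, this leaves only $d=n$. Finally, for $d=n$ we have $h_n=\sum_{r+s=n}\la_{r,s}y^rx^s$ with $1\le r,s\le n-1$; the $(i,j)=(0,s)$ term of $\t(y^rx^s)$ in Proposition~\ref{3.3} equals $\om^{-rs}\,y^r\ot x^sg^r$, the basis vector $y^r\ot x^sg^r$ occurs in $\t(h_n)$ only from the pair $(r,s)$ (with coefficient $\la_{r,s}\om^{-rs}$) and in neither summand of $1\ot h_n+h_n\ot1$ (its first tensor factor is $y^r\ne 1$, its second $x^sg^r\ne 1$), so $\la_{r,s}=0$ for all $r,s$ — contradicting $h_n\ne0$. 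Hence $h=h_0=1$, and undoing the normalization, $h=g^{l_0}$.

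The step I expect to be delicate is the case $d=n$: for every other positive $d$ the two endpoint components $H_0\ot H_d$ and $H_d\ot H_0$ of $\t(h_d)$ already force $h_d=0$, but when $d=n$ these impose no contradiction and one must squeeze the information out of an interior component $H_a\ot H_{n-a}$, tracking through Proposition~\ref{3.3} precisely which indices $(i,j)$ produce a prescribed basis vector. Everything else is routine bookkeeping with the grading.
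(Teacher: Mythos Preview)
Your proof is correct, but it takes a longer and structurally different route than the paper's.

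The paper argues directly by a maximal-degree count: if $r_1$ is the largest $y$-exponent occurring in $h$, then by Proposition~\ref{3.3} every term of $\t(h)$ has total $y$-degree at most $r_1$, while $h\ot h$ contains the term $y^{r_1}x^{s_1}g^{l_1}\ot y^{r_1}x^{s_1}g^{l_1}$ of total $y$-degree $2r_1$; hence $2r_1\le r_1$, so $r_1=0$. The same argument for $x$ gives $h\in\mathrm{span}\{g^l\}$, and linear independence of group-likes finishes. This is a two-line filtration argument that never needs to identify $h_0$, normalize, or analyze primitives.

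Your approach instead passes through the graded coalgebra structure: you first show $h_0\in G(H_0)$, reduce to $h_0=1$, and then prove there are no nonzero primitives by a three-step coefficient comparison, the last step (the case $d=n$) requiring a genuinely finer look inside $\t(h_n)$. Everything you wrote checks out, including the delicate $d=n$ step. The trade-off: the paper's argument is shorter and avoids the primitive analysis entirely, whereas your method is closer in spirit to the coradical-filtration viewpoint and essentially proves along the way that $H$ has no nonzero primitive elements---information not needed here but of independent interest, and related to the skew-primitive computations carried out later in Lemmas~\ref{4.6} and~\ref{4.7}.
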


\begin{proof} Obviously, $g^l\in G(H)$ for all $0\leqslant l<n$.
Conversely, let $$h=\sum_{0\leqslant r,s,l<n}\la_{{r},{s},{l}}y^rx^{s}g^{l}\in G(H),$$
where ${\la_{{r},{s},{l}}\in \mathbb C}$.
Assume that ${r_{1}}$ is the highest degree of $y$ in the terms of $h$, that is, there is a nonzero coefficient
${\la_{{r_{1}},{s_{1}},{l_{1}}}\neq 0}$ in the above expression of ${h}$ such that $\la_{{r},{s},{l}}\neq 0$
implies $r\leqslant r_{1}$. From Proposition \ref {3.3}, one knows that the total degree of $y$
in each term of the expression of $\t(y^rx^sg^l)$ is $r$. Then from $$\t(h)=\sum_{r,s,l}\la_{r,s,l}\t(y^rx^sg^l),$$
one gets that the highest total degree of $y$ in the terms of $\t(h)$ is ${r_{1}}$. However,
$$y^{r_1}x^{s_1}g^{l_1}\ot y^{r_1}x^{s_1}g^{l_1}$$
is a term of  $h\ot h$  with the nonzero coefficient ${{\la^{2}}_{{r_1},{s_1},{l_1}}\neq 0}$.
It follows from $\t(h)=h\ot h$ that $0\leqslant {2r_{1}}\leqslant {r_{1}}$, which implies  ${r_{1}}=0$.
Thus, if $r>0$ then ${\la_{r,s,l}}=0$. Similarly, one can show that ${\la_{r,s,l}}=0$ for any $s>0$.
Therefore, $h\in{\rm span}\{g^l|0\leqslant l<n\}$. Since $G(H)$ is linearly independent over ${\mathbb C}$ and
$\{g^l|0\leqslant l<n\}\subseteq G(H)$, we have $h=g^l$ for some $0\leqslant l<n$. Hence
$G(H)\subseteq \{g^l|0\leqslant l<n\}$, and so $G(H)=\{g^l|0\leqslant l<n\}$.
\end{proof}

\begin{lemma}\label{4.6}
Let $h\in H$. If $\triangle(h)={h\ot g+1\ot h}$, then
$h=\la_{1} x+\la_2y+\la_{3}(1-g)$ for some $\la_1, \la_2, \la_3\in{\mathbb C}$.
\end{lemma}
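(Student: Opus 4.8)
The plan is to expand $h=\sum_{0\le r,s,l<n}\la_{r,s,l}y^rx^sg^l$ in the canonical basis and to exploit the grading implicit in Proposition~\ref{3.3}. Call $r+s$ the \emph{$yx$-degree} of the monomial $y^rx^sg^l$, and extend it additively to $H\ot H$. Proposition~\ref{3.3} says precisely that every term of $\t(y^rx^sg^l)$ has $yx$-degree $r+s$ (the relation $g^n=1$ only affects the power of $g$), so $\t$ preserves $yx$-degree; clearly the maps $h\mapsto h\ot g$ and $h\mapsto 1\ot h$ do too. Hence the identity $\t(h)=h\ot g+1\ot h$ splits into its homogeneous $yx$-degree components, so it suffices to treat the case where $h$ is homogeneous of $yx$-degree $d$: I must show $h=0$ whenever $d\ge 2$, while for $d\in\{0,1\}$ I must recover the asserted shape.

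For $d\ge 2$ the key step is a further filtration of $H\ot H$, by the $yx$-degree of the \emph{right} tensor leg. On the right-hand side of the identity, $h\ot g$ lies in right-leg degree $0$ and $1\ot h$ in right-leg degree $d\ge 2$, so the right-leg-degree-$1$ component of $\t(h)$ must vanish. By Proposition~\ref{3.3} that component is exactly the sum of the $(i,j)=(1,0)$ and $(i,j)=(0,1)$ terms,
$$\sum_{r+s=d}\la_{r,s,l}\binom{r}{1}_{\om}\,y^{r-1}x^sg^l\ot yg^{l+d-1}\ +\ \sum_{r+s=d}\la_{r,s,l}\,\om^{-r}\binom{s}{1}_{\om^{-1}}\,y^rx^{s-1}g^l\ot xg^{l+d-1}.$$
The first sum lies in $H\ot\big(\bigoplus_{m}\mathbb C\, yg^m\big)$ and the second in $H\ot\big(\bigoplus_{m}\mathbb C\, xg^m\big)$, so the two must vanish separately; and inside each sum the vectors $y^{r-1}x^sg^l\ot yg^{l+d-1}$ (resp.\ $y^rx^{s-1}g^l\ot xg^{l+d-1}$) are pairwise distinct basis vectors of $H\ot H$. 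Since $\om$ has order exactly $n$ and $0<r,s<n$, the Gaussian integers $\binom{r}{1}_{\om}=1+\om+\cdots+\om^{r-1}$ and $\binom{s}{1}_{\om^{-1}}$ are nonzero for $r\ge 1$, resp.\ $s\ge 1$. It follows that $\la_{r,s,l}=0$ for every term with $r+s=d$; since $d\ge 2$ forces $r\ge 1$ or $s\ge 1$, we get $h=0$.

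By the two steps above, $h=h_{[0]}+h_{[1]}$, so $h=\sum_l c_lg^l+\sum_l a_lxg^l+\sum_l b_lyg^l$ for scalars $a_l,b_l,c_l$. Now I would substitute the explicit coproducts $\t(g^l)=g^l\ot g^l$, $\t(xg^l)=xg^l\ot g^{l+1}+g^l\ot xg^l$ and $\t(yg^l)=yg^l\ot g^{l+1}+g^l\ot yg^l$ into $\t(h)=h\ot g+1\ot h$ and compare coefficients in the basis of $H\ot H$. Comparing the coefficient of $xg^a\ot g^b$ forces $a_l=0$ for $l\ne 0$ (the coproduct puts this contribution at $xg^l\ot g^{l+1}$, whereas $h\ot g$ demands $xg^l\ot g$, and $l+1\equiv 1$ only when $l\equiv 0$); likewise $b_l=0$ for $l\ne 0$; and comparing the coefficient of $g^a\ot g^b$ forces $c_l=0$ for $l\notin\{0,1\}$ together with $c_1=-c_0$, i.e.\ $\sum_l c_lg^l=c_0(1-g)$. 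This yields $h=\la_1 x+\la_2 y+\la_3(1-g)$ with $\la_1=a_0$, $\la_2=b_0$, $\la_3=c_0$.

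I expect the only real obstacle to be bookkeeping rather than ideas: verifying that the two grading arguments are legitimate (in particular that $g^n=1$ does not disturb $yx$-degree, which Proposition~\ref{3.3} makes transparent), carrying the $g$-exponent $l+s-j+r-i$ carefully enough to detect which basis vectors of $H\ot H$ coincide, and noting that the relevant Gaussian integers do not vanish --- this last point being exactly where the hypotheses $0<r,s<n$ and the fact that $\om$ has order $n$ enter.
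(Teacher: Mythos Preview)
Your proof is correct. Both your argument and the paper's rest on the same underlying idea---using Proposition~\ref{3.3} to see that $\triangle$ respects the $(y,x)$-degree, reducing to homogeneous pieces, and then comparing coefficients in the canonical basis---but the organization differs. The paper uses the finer bigrading by $(r,s)$ separately: it first pins down the $g$-exponent via the coefficient of $g^l\otimes y^rx^sg^l$, and then runs a case split ($r,s>0$; $r>0,s=0$; $r=0,s>0$), in each case comparing one further coefficient. You instead grade only by the total degree $d=r+s$ and kill all $d\ge 2$ in a single stroke via the right-leg-degree-$1$ component, which makes the role of the nonvanishing Gaussian integers $\binom{r}{1}_{\om}$ and $\binom{s}{1}_{\om^{-1}}$ explicit; the paper uses this same nonvanishing implicitly when it deduces $\la_0=0$ from the coefficient of $y^{r-1}\otimes yg^{r-1}$. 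Your route is a bit more streamlined and uniform; the paper's is more hands-on but reaches the same conclusion by the same mechanism.
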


\begin{proof}
Let $h=\sum_{0\leqslant r,s,l<n}\la_{r,s,l}{y^rx^sg^l}$ with $\la_{r,s,l}\in{\mathbb C}$ such that
$$\triangle(h)=h\ot g+1\ot h.$$
Then $\e(h)=0$.
For any $0\leqslant r,s<n$, let
$$h_{r,s}=\sum_{l=0}^{n-1}\la_{r,s,l}{y^rx^sg^l}.$$
Then by Proposition \ref{3.3} and the proof of Lemma \ref{3.4}, one knows that
$$\triangle(h_{r,s})=h_{r,s}\ot g+1\ot h_{r,s}, \ \forall r,s.$$
Hence, one may assume that
$$h=y^{r}x^s(\sum_{l=0}^{n-1}\la_{l}g^l)\neq 0$$
for some $\la_{l}\in{\mathbb C}$, where $r$ and $s$ are fixed integers with $0\leqslant r, s<n$.
Now, by Proposition \ref{3.3}, we have
\begin{equation}
\label{a}
\t(h)=\sum_{l=0}^{n-1}\sum_{i=0}^r\sum_{j=0}^s\la_{l}\omega^{-(r-i)j}
\binom{r}{i}_{\omega}\binom{s}{j}_{\omega^{-1}}y^{r-i}x^{s-j}g^{l}\ot y^ix^{j}g^{l+s-j+r-i}
\end{equation}
and
\begin{equation}
\label{b}
\t(h)={h\ot g+1\ot h}=\sum_{l=0}^{n-1}\la_{l}(y^rx^sg^l\ot{g+1}\ot y^rx^sg^l).
\end{equation}
By the paragraph before Lemma \ref{3.4}, $H\ot H$ has a canonical basis over ${\mathbb C}$
$$\{y^rx^sg^l\ot y^{r_1}x^{s_1}g^{l_1}|0\leqslant r, r_1, s, s_1, l, l_1<n\}.$$
Now by comparing the coefficients of the basis element $g^l\ot y^rx^sg^l$
in the two expressions of $\t(h)$ given above, one gets that ${\la_{l}=0}$
if $l>1$, and that $\la_1=0$ if $(r,s)\neq(0, 0)$. Hence, $h=\la_0y^rx^s$
when $r+s\neq 0$, and $h=\la_0+\la_1g$ when $r=s=0$.

If $h=\la_0+\la_1g$, then $\la_0+\la_1=0$ by $\e(h)=0$, and so $h=\la_0(1-g)$.
Now assume $h=\la_0y^rx^s$ with $r+s\neq0$. Then (\ref{a}) and (\ref{b}) becomes
\begin{equation}
\label{a2}
\t(h)=\sum_{i=0}^r\sum_{j=0}^s\la_0\omega^{-(r-i)j}
\binom{r}{i}_{\omega}\binom{s}{j}_{\omega^{-1}}y^{r-i}x^{s-j}\ot y^ix^{j}g^{s-j+r-i}
\end{equation}
and
\begin{equation}
\label{b2}
\t(h)={h\ot g+1\ot h}=\la_0(y^rx^s\ot g+1\ot y^rx^s),
\end{equation}
respectively. If both $r>0$ and $s>0$, then by comparing the coefficients of the basis element $y^r\ot x^sg^r$
in the two expressions of $\t(h)$ given above, one gets that $\la_0=0$, and hence $h=0$,
a contradiction. Hence either $r>0$ and $s=0$, or $r=0$ and $s>0$.
If $r>0$ and $s=0$, then $h=\la_0y^r$, and (\ref{a2}) and (\ref{b2}) becomes
\begin{equation}
\label{a3}
\t(h)=\sum_{i=0}^r\la_0\binom{r}{i}_{\omega}y^{r-i}\ot y^ig^{r-i}
\end{equation}
and
\begin{equation}
\label{b3}
\t(h)={h\ot g+1\ot h}=\la_0(y^r\ot g+1\ot y^r),
\end{equation}
respectively. If $r>1$, then by  comparing the coefficients of the basis element $y^{r-1}\ot yg^{r-1}$
in the two expressions of $\t(h)$ given above, one gets that $\la_0=0$, and hence $h=0$,
a contradiction. Hence $r=1$ and so $h=\la_0 y$.
Similarly, one can check that if $r=0$ and $s>0$ then $h=\la_0 x$.
This completes the proof.
\end{proof}

\begin{lemma}\label{4.7} Let $h\in H$ with $\t(h)=h\ot g^w+1\ot h$ for some $1<w<n$.
Then $h=\la(1-g^w)$ for some $\la\in\mathbb {C}$.
\end{lemma}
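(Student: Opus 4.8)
The plan is to mirror the proof of Lemma~\ref{4.6}. Write $h=\sum_{0\leqslant r,s,l<n}\la_{r,s,l}y^rx^sg^l$ with $\la_{r,s,l}\in\mathbb{C}$. Applying $(\e\ot\mathrm{id}_H)$ to $\t(h)=h\ot g^w+1\ot h$ and using $\e(g^w)=1$ gives $h=\e(h)g^w+h$, hence $\e(h)=0$. Next, by Proposition~\ref{3.3} (and the proof of Lemma~\ref{3.4}) every term of $\t(y^rx^sg^l)$ has total $y$-degree $r$ and total $x$-degree $s$, and so does every term of $h\ot g^w$ and of $1\ot h$; grouping the terms of $h$ according to their $y$-degree and $x$-degree and comparing the corresponding homogeneous components of $\t(h)=h\ot g^w+1\ot h$, I reduce to the case
$$h=y^rx^s\Big(\sum_{l=0}^{n-1}\la_lg^l\Big)\neq 0$$
for a fixed pair $(r,s)$ with $0\leqslant r,s<n$, and I must show this forces $(r,s)=(0,0)$ and $h=\la(1-g^w)$.

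Then I would expand $\t(h)$ via Proposition~\ref{3.3} and compare it with $h\ot g^w+1\ot h$ in the canonical basis $\{y^rx^sg^l\ot y^{r_1}x^{s_1}g^{l_1}\}$ of $H\ot H$. Comparing the coefficients of the basis vectors $g^l\ot y^rx^sg^l$ for $0\leqslant l<n$: in $\t(h)$ this coefficient is $\la_l$ (it comes only from the summand with $i=r$, $j=s$), whereas in $h\ot g^w+1\ot h$ it is $\la_0$ when $l=0$, it is $\la_w$ when $l=w$ and $(r,s)=(0,0)$, and it is $0$ otherwise (note $w\neq 0$ since $1<w<n$). Hence $\la_l=0$ for all $l\notin\{0,w\}$, and moreover $\la_w=0$ whenever $(r,s)\neq(0,0)$. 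So $h=\la_0+\la_wg^w$ if $(r,s)=(0,0)$, and $h=\la_0y^rx^s$ if $(r,s)\neq(0,0)$.

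If $(r,s)=(0,0)$ then $0=\e(h)=\la_0+\la_w$, so $h=\la_0(1-g^w)$, the asserted form. If $(r,s)\neq(0,0)$ then $h=\la_0y^rx^s$, and I claim $\la_0=0$; this gives $h=0$, contradicting $h\neq 0$, and so rules this case out. Expanding $\t(h)$ by Proposition~\ref{3.3} with $l=0$, the only summand whose first tensor leg is $y^rx^s$ is the $(i,j)=(0,0)$ one, equal to $\la_0\,y^rx^s\ot g^{(r+s)\bmod n}$; meanwhile $h\ot g^w+1\ot h=\la_0\,(y^rx^s\ot g^w)+\la_0\,(1\ot y^rx^s)$ has $\la_0\,y^rx^s\ot g^w$ as its only term with first leg $y^rx^s$ (here $1\neq y^rx^s$ because $(r,s)\neq(0,0)$). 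If $r+s\not\equiv w\pmod{n}$, equating the coefficients of $y^rx^s\ot g^w$ yields $\la_0=0$. If $r+s\equiv w\pmod{n}$, then $r+s\geqslant w\geqslant 2$, so $\{0,\dots,r\}\times\{0,\dots,s\}$ contains a pair $(i,j)\notin\{(0,0),(r,s)\}$ (take $(1,0)$ if $r\geqslant 1$, and otherwise $(0,1)$, since then $s=r+s\geqslant 2$); the corresponding summand
$$\la_0\,\om^{-(r-i)j}\binom{r}{i}_{\om}\binom{s}{j}_{\om^{-1}}\,y^{r-i}x^{s-j}g^0\ot y^ix^jg^{(r+s-i-j)\bmod n}$$
of $\t(h)$ matches no term of $h\ot g^w+1\ot h$ (that would force $(i,j)=(0,0)$ or $(i,j)=(r,s)$), and since $\om$ has order $n$ and $0\leqslant i\leqslant r<n$, $0\leqslant j\leqslant s<n$, the Gaussian binomials $\binom{r}{i}_{\om}$ and $\binom{s}{j}_{\om^{-1}}$ are nonzero, whence $\la_0=0$.

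The step I expect to be the crux --- and the only place where the hypothesis $1<w<n$ is used --- is this last one: for $w=1$ the monomials $x$ and $y$ are themselves $(1,g)$-primitive and do survive (this is exactly the content of Lemma~\ref{4.6}), whereas for $w>1$ one must invoke both $g^w\neq g$ and the non-vanishing of the Gaussian binomial coefficients (which is where the order of $\om$ matters) in order to discard every would-be primitive $y^rx^s$ with $(r,s)\neq(0,0)$. Everything else is a routine transcription of the coefficient-comparison argument in the proof of Lemma~\ref{4.6}.
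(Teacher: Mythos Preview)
Your proof is correct and follows essentially the same coefficient-comparison strategy as the paper's own argument, using Proposition~\ref{3.3} and the canonical basis of $H\otimes H$. The only organisational difference is that the paper first disposes of the case $r\neq 0,\ s\neq 0$ separately (by looking at $y^rg^l\otimes x^sg^{l+r}$) and then treats $r\neq 0,\ s=0$ (respectively $s\neq 0,\ r=0$) on its own, whereas you handle all $(r,s)\neq(0,0)$ in one stroke via the ``middle'' Gaussian-binomial term; both routes hinge on the same non-vanishing of $\binom{r}{i}_\omega$ for $0\leqslant i\leqslant r<n$.
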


\begin{proof} It is similar to the proof of Lemma \ref{4.6}.
We only need to consider the case of
$$h=y^rx^s(\sum_{l=0}^{n-1}\la_{l}g^l)\neq 0$$
for some $\la_{l}\in\mathbb{ C}$, where $r$ and $s$ are fixed integers with $0\leqslant r, s<n$.
Then we have
\begin{equation}
\label{m}
\t(h)=h\ot g^w+1\ot h=\sum_{l=1}^{n-1}\la_{l}(y^rx^sg^l\ot g^w+1\ot y^rx^sg^l)
\end{equation}
and
\begin{equation}
\label{n}
\t(h)=\sum_{l=0}^{n-1}\sum_{i=0}^r\sum_{j=0}^s\la_{l}\omega^{-(r-i)j}
\binom{r}{i}_{\omega}\binom{s}{j}_{\omega^{-1}}y^{r-i}x^{s-j}g^{l}\ot y^ix^{j}g^{l+s-j+r-i}.
\end{equation}
If $r\neq 0$ and $s\neq 0$, then by comparing the coefficients of the basis element $y^rg^l\ot x^sg^{l+r}$
in the two expressions of $\t(h)$ given above, one gets that ${\la_{l}=0}$
for all $0\leqslant l<n$, and hence $h=0$, a contradiction. So $r=0$ or $s=0$.
Assume that $r\neq 0$. Then $s=0$. In this case, by comparing the coefficients of the basis element $g^l\ot y^rg^l$
in the two expressions of $\t(h)$ given above, one gets that ${\la_{l}=0}$
if $l>0$. Hence $h=\la_0y^r$, and (\ref{m}) and (\ref{n}) become
\begin{equation}
\label{m2}
\t(h)=h\ot g^w+1\ot h=\la_0(y^r\ot g^w+1\ot y^r)
\end{equation}
and
\begin{equation}
\label{n2}
\t(h)=\sum_{i=0}^r\la_0\binom{r}{i}_{\omega}y^{r-i}\ot y^ig^{r-i}
\end{equation}
respectively. Then by comparing the coefficients of the basis element $y^r\ot g^r$
in the both expressions of $\t(h)$ given in (\ref{m2}) and (\ref{n2}),
one gets that $r=w>1$ since $h=\la_0y^r\neq 0$.
Now by comparing the coefficients of the basis element $y^{r-1}\ot yg^{r-1}$
in the both expressions of $\t(h)$ given in (\ref{m2}) and (\ref{n2}),
one finds that $\la_0=0$, a contradiction. This shows that $r=0$.
Similarly, one can show that $s=0$. Hence $h=\sum_{l}\la_{l}g^l\neq 0$.
Then it is easy to see that $h=\la(1-g^w)$ for some $\la\in{\mathbb C}$.
\end{proof}

\begin{theorem}\label{4.8}
$(1)$ If $n>2$, then Lemma \ref{3.1} gives all Hopf $\ast$-algebra structures on $H$.\\
$(2)$ If $n=2$, then Lemma \ref{3.2} gives all Hopf $\ast$-algebra structures on $H$.
\end{theorem}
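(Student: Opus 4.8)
The plan is to show that an arbitrary $\ast$-structure on $H$ must have the form described in Lemma~\ref{3.1} (when $n>2$) or Lemma~\ref{3.2} (when $n=2$); combined with those two lemmas, which already establish that every such form \emph{is} a $\ast$-structure, this gives the classification of all structures. So let $\ast$ be any $\ast$-structure on $H$. The first step is to pin down $g^\ast$. Since $\ast$ is an antilinear coalgebra endomorphism (as noted after Definition~\ref{2.1}), it maps group-like elements to group-like elements; by Lemma~\ref{3.4}, $G(H)=\{g^l\mid 0\le l<n\}$, so $g^\ast=g^w$ for some $0\le w<n$. The involutivity condition $(g^\ast)^\ast=g$ forces $g^{w^2}\equiv g\pmod{g^n}$, i.e. $w^2\equiv 1\pmod n$. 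One then uses the antimultiplicativity $(hl)^\ast=l^\ast h^\ast$ together with the relation $xg=\omega gx$ to constrain things further: applying $\ast$ to $xg=\omega gx$ gives $g^\ast x^\ast=\overline{\omega}\,x^\ast g^\ast$, i.e. $g^w x^\ast=\omega^{-1}x^\ast g^w$. Writing $x^\ast$ in the canonical basis and comparing how $g$ conjugates each term, together with the analogous identity coming from $gy=\omega yg$, should rule out all $w$ except $w=1$; the key point is that conjugation by $g$ acts on $y^rx^sg^l$ by the scalar $\omega^{(r-s)w}$ or similar, and matching this against the fixed scalar $\omega^{-1}$ (resp.\ $\omega$) from the $x$- (resp.\ $y$-) relation pins down both $w$ and the $y,x$-degrees occurring in $x^\ast$ and $y^\ast$.

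The second step is to determine $x^\ast$ and $y^\ast$. Apply $\ast$ to the comultiplication identity $\t(x)=x\ot g+1\ot x$: since $\ast$ is an antilinear coalgebra endomorphism and $g^\ast=g$, we get $\t(x^\ast)=x^\ast\ot g+1\ot x^\ast$. By Lemma~\ref{4.6}, $x^\ast=\la_1 x+\la_2 y+\la_3(1-g)$ for some scalars, and likewise $y^\ast=\mu_1 x+\mu_2 y+\mu_3(1-g)$. Next I would use the algebra relations to kill the $(1-g)$ terms and, when $n>2$, the off-diagonal terms as well. For instance, applying $\ast$ to $xg=\omega gx$ (now with $g^\ast=g$) gives $x^\ast g=\omega^{-1}g x^\ast$; feeding in $x^\ast=\la_1x+\la_2y+\la_3(1-g)$ and using $xg=\omega gx$, $yg=\omega^{-1}gy$ forces $\la_3=0$, and when $n>2$ also $\la_2=0$ (because $x$ and $y$ conjugate by distinct scalars under $g$, and a further relation such as $x^\ast y^\ast=\omega^{-1}y^\ast x^\ast$ coming from $\ast$ applied to $xy=\omega yx$ forces the cross terms to vanish unless $\omega^2=1$). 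Similarly $\mu_3=0$ and (for $n>2$) $\mu_1=0$. So for $n>2$ we have $x^\ast=\a x$, $y^\ast=\b y$, while for $n=2$ we have $x^\ast=\a_{11}x+\a_{12}y$, $y^\ast=\a_{21}x+\a_{22}y$, i.e.\ $x^\ast,y^\ast$ are given by a matrix $A\in M_2(\mathbb C)$.

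The third step is to extract the normalization conditions. The involutivity $(x^\ast)^\ast=x$, expanded with antilinearity, reads $\overline{\a}\a=1$ (i.e.\ $|\a|=1$) when $n>2$, and $\overline{A}A=I_2$ when $n=2$ — exactly the computation reversed from the proofs of Lemmas~\ref{3.1} and~\ref{3.2}. One should also check the remaining defining conditions ($x^\ast{}^n=0$, the mixed relations, and $S(S(h^\ast)^\ast)=h$) are automatically consistent and impose nothing new; in fact they are already verified in those lemmas for precisely these parameter sets, and the constraint $\overline{A}A=I_2$ (resp.\ $|\a|=1$) is what makes $(x^\ast)^\ast=x$ hold, so no extra condition survives. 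Note also that $i^\ast=-i$ is forced, since $i\in Z(H)$ and $i^2=-1$ give $(i^\ast)^2=-1$ with $i^\ast$ central, and $i^\ast\in\mathbb C1_H$ because $\ast|_{\mathbb C}$ is complex conjugation as remarked after Definition~\ref{2.1}. Assembling these, any $\ast$-structure is one of those in Lemma~\ref{3.1} (case $n>2$) or Lemma~\ref{3.2} (case $n=2$), which is the assertion.

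\textbf{Main obstacle.} The delicate point is the second step: showing that the $(1-g)$ summand and (for $n>2$) the off-diagonal summand in $x^\ast,y^\ast$ really must vanish. The vanishing of the $1-g$ term is a clean consequence of the $g$-conjugation relation since $1-g$ does not $\omega$-commute with $g$ the right way; but the vanishing of the cross terms when $n>2$ requires carefully combining the relation $x^\ast y^\ast=\omega^{-1}y^\ast x^\ast$ (from $\ast$ applied to $xy=\omega yx$, using $\omega^\ast=\omega^{-1}$) with $x^\ast g=\omega^{-1}gx^\ast$ and the analogous $y$-relation, and checking that a nonzero cross term is incompatible with $\omega^n=1$, $\omega\ne \pm1$ when $n>2$ and $\omega\neq -1$. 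This is where the hypothesis $n>2$ (equivalently $\omega\neq\pm 1$, allowing $n=3$ where $\omega^2\ne 1$ but $\omega\ne -1$; more precisely $n>2$ means $\omega$ is not a square root of $1$ unless $n$ is even, and the argument must handle even $n>2$ too, using that $\{1,\omega^{-1}\}$ and $\{1,\omega\}$ separate $x$ from $y$) gets used in an essential way, and the bookkeeping with $\omega$-binomial coefficients and degrees must be done with some care.
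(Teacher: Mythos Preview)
Your overall strategy --- determine $g^\ast$, then determine $x^\ast,y^\ast$ via the skew-primitive constraint, then extract the parameter conditions from involutivity --- is exactly the paper's. Steps~2 and~3 of your plan essentially reproduce the paper's argument (the paper also uses only the relation $(xg)^\ast=(\omega gx)^\ast$ to obtain $\a_{13}=0$ and $\a_{12}(1-\omega^{-2})=0$, and symmetrically for $y$; the $xy$-relation you worry about in your ``main obstacle'' is not needed, so that obstacle is overstated).

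The genuine gap is in Step~1, where you claim that the conjugation identity $g^w x^\ast=\omega^{-1}x^\ast g^w$ (together with its $y$-analogue) ``should rule out all $w$ except $w=1$.'' It does not. Writing $x^\ast=\sum_{r,s,l}c_{r,s,l}\,y^r x^s g^l$ and using $g\,y^r x^s g^l\,g^{-1}=\omega^{r-s}y^r x^s g^l$, the identity only forces $w(r-s)\equiv -1\pmod n$ for every nonzero term; since $w^2\equiv 1$, this merely says $r-s\equiv -w\pmod n$, which is a constraint on the support of $x^\ast$, not on $w$. For example, if $n=8$ and $w=3$ (so $w^2\equiv 1$), terms with $r-s\equiv 5\pmod 8$ survive, and nothing in your Step~1 excludes them.

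The paper closes this gap by using the coalgebra structure \emph{before} specializing to $w=1$: from $\t(x^\ast)=x^\ast\ot g^w+1\ot x^\ast$ with $1<w<n$, Lemma~\ref{4.7} gives $x^\ast=\la(1-g^w)$, and then involutivity yields $x=(x^\ast)^\ast=\overline{\la}(1-g^{w^2})$, which is impossible. You never invoke Lemma~\ref{4.7}; once you do, Step~1 is a two-line argument and the rest of your proof goes through as written.
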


\begin{proof}
Assume that $H$ has a Hopf $\ast$-algebra structure $\ast$. Then
$$\t(g^\ast)=(*\ot *)\t(g)=g^*\ot g^*\ \text{ and }\ \e(g^*)=\overline{\e(g)}=1.$$
Hence $g^*\in G(H)$. By Lemma \ref{3.4}, $g^*=g^w$ for some $0\leqslant w<n$.
Since $*$ is an involution and $1^*=1$, $g^*\neq 1$. Hence $w\neq 0$, and so $0<w<n$.
We also have
$$\t(x^\ast)=(\ast\ot \ast)\t(x)=x^\ast\ot g^\ast+1^\ast\ot x^\ast=x^\ast\ot g^w+1\ot x^\ast.$$
If $w\neq 1$, then it follows from Lemma \ref{4.7} that $x^*=\la(1-g^w)$ for some $\la\in\mathbb C$.
Since $*$ is an involution and a conjugate-linear antialgebra endomorphism of $H$, we have
$x=(x^*)^*=(\la(1-g^w))^*=\overline{\la}(1-g^{w^2})$. This is impossible.
Hence $w=1$, and so $g^*=g$ and $\t(x^\ast)=x^\ast\ot g+1\ot x^\ast$.
Then by Lemma \ref{4.6}, $x^\ast=\a_{11} x+\a_{12} y+\a_{13}(1-g)$
for some $\a_{11}, \a_{12}, \a_{13}\in\mathbb {C}$.
Similarly, one can show that $y^\ast=\a_{21} x+\a_{22}y+\a_{23}(1-g)$
for some $\a_{21}, \a_{22}, \a_{23}\in\mathbb {C}$.
Then by $xg=\omega gx$, one gets that $(xg)^*=(\omega gx)^*$.
However, $(xg)^*=g^*x^*=g(\a_{11}x+\a_{12}y+\a_{13}(1-g))=\a_{11}gx+\a_{12}gy+\a_{13}(g-g^2)$
and $(\omega gx)^*=\overline{\omega}x^*g^*=\omega^{-1}(\a_{11}x+\a_{12}y+\a_{13}(1-g))g
=\omega^{-1}\a_{11}xg+\omega^{-1}\a_{12}yg+\omega^{-1}\a_{13}(g-g^2)
=\a_{11}gx+\omega^{-2}\a_{12}gy+\omega^{-1}\a_{13}(g-g^2)$.
It follows that $\a_{12}=\omega^{-2}\a_{12}$ and $\a_{13}=\omega^{-1}\a_{13}$.
Hence $\a_{12}(1-\omega^2)=0$ and $\a_{13}=0$ by $\omega\neq 1$.
Similarly, from $(gy)^*=(\omega yg)^*$, one gets that $\a_{21}(1-\omega^2)=0$ and $\a_{23}=0$.

(1) Assume that $n>2$. Then $\omega^2\neq 1$, and hence $\a_{12}=\a_{21}=0$
by $\a_{12}(1-\omega^2)=0$ and $\a_{21}(1-\omega^2)=0$.
Thus, $x^*=\a_{11}x$ and $y^*=\a_{22}y$.
Then we have $x=(x^*)^*=(\a_{11}x)^*
=\overline{\a_{11}}x^*=\overline{\a_{11}}\a_{11}x$,
which implies that $|\a_{11}|=1$. Similarly, one can show that $|\a_{22}|=1$.
This shows Part (1).

(2) Assume that $n=2$. Then $x^*=\a_{11}x+\a_{12}y$ and $y^*=\a_{21}x+\a_{22}y$.
Hence we have $x=(x^*)^*=(\a_{11}x+\a_{12}y)^*=\ol{\a_{11}}x^*+\ol{\a_{12}}y^*
=\ol{\a_{11}}(\a_{11}x+\a_{12}y)+\ol{\a_{12}}(\a_{21}x+\a_{22}y)
=(\ol{\a_{11}}\a_{11}+\ol{\a_{12}}\a_{21})x+(\ol{\a_{11}}\a_{12}+\ol{\a_{12}}\a_{22})y$
and
$y=(y^*)^*=(\a_{21}x+\a_{22}y)^*=\ol{\a_{21}}x^*+\ol{\a_{22}}y^*
=\ol{\a_{21}}(\a_{11}x+\a_{12}y)+\ol{\a_{22}}(\a_{21}x+\a_{22}y)
=(\ol{\a_{21}}\a_{11}+\ol{\a_{22}}\a_{21})x+(\ol{\a_{21}}\a_{12}+\ol{\a_{22}}\a_{22})y$.
It follows that
$$\left(
    \begin{array}{cc}
      \ol{\a_{11}} & \ol{\a_{12}} \\
      \ol{\a_{21}} & \ol{\a_{22}} \\
    \end{array}
  \right)
  \left(
    \begin{array}{cc}
      \a_{11} & \a_{12} \\
      \a_{21} & \a_{22} \\
    \end{array}
  \right)
  =\left(
    \begin{array}{cc}
      1 & 0 \\
      0 & 1 \\
    \end{array}
  \right).$$
This shows Part (2).
\end{proof}

\begin{theorem}\label{4.9}
If $n\geqslant 3$, then up to equivalence, there is a unique Hopf $\ast$-algebra structure on $H$ given by
$$g^*=g,\ x^*=x,\ y^*=y.$$
\end{theorem}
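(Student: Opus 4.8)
The plan is to combine Theorem \ref{4.8}(1) with a suitable family of Hopf algebra automorphisms of $H$. By Theorem \ref{4.8}(1), an arbitrary Hopf $\ast$-algebra structure $\ast'$ on $H$ has the form
$$g^{\ast'}=g,\quad x^{\ast'}=\a x,\quad y^{\ast'}=\b y$$
for some $\a,\b\in\mathbb{C}$ with $|\a|=|\b|=1$; write $\ast''$ for the $\ast$-structure of Lemma \ref{3.1} with $\a=\b=1$, whose existence is already guaranteed by that lemma. Since equivalence of $\ast$-structures is a symmetric relation (if a Hopf algebra automorphism $\psi$ witnesses $\ast'\sim\ast''$, then $\psi^{-1}$ witnesses $\ast''\sim\ast'$), it suffices to exhibit, for each such pair $(\a,\b)$, a Hopf algebra automorphism $\psi$ of $H$ with $\psi(h^{\ast'})=\psi(h)^{\ast''}$ for all $h\in H$.

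First I would set up the automorphisms to be used. For $\mu,\nu\in\mathbb{C}\setminus\{0\}$ there is a unique Hopf algebra automorphism $\psi=\psi_{\mu,\nu}$ of $H$ determined by $\psi(g)=g$, $\psi(x)=\mu x$, $\psi(y)=\nu y$: scaling $x$ and $y$ by nonzero scalars preserves each of the defining relations $g^n=1$, $x^n=y^n=0$, $xg=\om gx$, $gy=\om yg$, $xy=\om yx$, so $\psi$ is a well-defined algebra endomorphism; it sends the canonical basis $\{g^lx^ry^s\mid 0\<l,r,s<n\}$ to $\{\mu^r\nu^s g^lx^ry^s\}$, hence is bijective; and $\t(\psi(h))=(\psi\ot\psi)\t(h)$ and $\e(\psi(h))=\e(h)$ are easily checked for $h\in\{g,x,y\}$ and therefore hold for all $h\in H$, so $\psi$ is a bialgebra, hence a Hopf algebra, automorphism.

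Next I would reduce the equivalence condition to two scalar equations. Both maps $h\mapsto\psi(h^{\ast'})$ and $h\mapsto\psi(h)^{\ast''}$ are conjugate-linear antialgebra endomorphisms of $H$, hence are each determined by their values on $g,x,y$, so it is enough to check the identity there. On $g$ both sides equal $g$. On $x$ one has $\psi(x^{\ast'})=\psi(\a x)=\a\mu x$ while $\psi(x)^{\ast''}=(\mu x)^{\ast''}=\ol{\mu}x$, so the condition on $x$ is $\a\mu=\ol{\mu}$; likewise the condition on $y$ is $\b\nu=\ol{\nu}$.

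It remains to solve these, which is precisely where $|\a|=|\b|=1$ is used: writing $\a=e^{i\varphi}$ with $\varphi\in\mathbb{R}$ and taking $\mu=e^{-i\varphi/2}$ yields $\a\mu=e^{i\varphi/2}=\ol{\mu}$, and a symmetric choice of $\nu$ handles $\b$. For these $\mu,\nu$ the automorphism $\psi_{\mu,\nu}$ witnesses $\ast'\sim\ast''$, and since $\ast'$ was arbitrary this shows every Hopf $\ast$-algebra structure on $H$ is equivalent to the stated one. The only steps that are not one-line computations are the verification that $\psi_{\mu,\nu}$ is a Hopf algebra automorphism and the observation that the equivalence condition can be tested on the generators $g,x,y$; I expect no real obstacle beyond these routine checks.
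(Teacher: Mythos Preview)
Your proof is correct and follows essentially the same approach as the paper: both use the diagonal scaling automorphism $g\mapsto g$, $x\mapsto \mu x$, $y\mapsto \nu y$ and reduce the equivalence to solving $\a\mu=\ol{\mu}$ (equivalently $\mu^2=\ol{\a}$) on the unit circle. The only cosmetic differences are that the paper orients the equivalence in the other direction (intertwining the trivial $*$ with the arbitrary one rather than vice versa) and phrases the solution as choosing $\la$ with $\la^2=\a$ instead of writing $\a=e^{i\varphi}$ and $\mu=e^{-i\varphi/2}$.
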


\begin{proof}
Assume that $n\geqslant 3$. Then by Lemma \ref{3.1},
the relations given in the theorem determine a Hopf $*$-algbera structure on $H$, denoted by $*'$.
Now let $*$ be any Hopf $*$-algebra structure on $H$. Then by Lemma \ref{3.1} and
Theorem \ref{4.8}(1), there exist elements $\a, \b \in\mathbb{C}$ with $|\a|=|\b|=1$ such that
$$g^*=g, \ x^*=\a x, \ y^*=\b y.$$
Pick up two elements $\la_1, \la_2\in\mathbb{C}$ with $\la_1^2=\a$ and $\la_2^2=\b$.
Then $|\la_1|=|\la_2|=1$ by $|\a|=|\b|=1$, and hence $\la_1^{-1}=\overline{\la_1}$
and $\la_2^{-1}=\overline{\la_2}$. It is easy to see that there is a Hopf algebra automorphism
$\phi$ of $H$ such that $\phi(g)=g$, $\phi(x)=\la_1x$ and $\phi(y)=\la_2y$.
Then $\phi(g^{*'})=\phi(g)=g=g^*=\phi(g)^*$,
$\phi(x^{*'})=\phi(x)=\la_1x=\la_1^{-1}\a x=\overline{\la_1}x^*=(\la_1x)^*=\phi(x)^*$
and $\phi(y^{*'})=\phi(y)=\la_2y=\la_2^{-1}\b y=\overline{\la_2}y^*=(\la_2y)^*=\phi(y)^*$.
Hence $\phi(h^{*'})=\phi(h)^*$ for all $h\in H$, and so $*$ is equivalent to $*'$.
\end{proof}

Throughout the following, assume that $n=2$. In this case, $\omega=-1$.

Let $A=\left(
         \begin{array}{cc}
           \a_{11} & \a_{12} \\
           \a_{21} & \a_{22} \\
         \end{array}
       \right)$
and $B=\left(
         \begin{array}{cc}
           \b_{11} & \b_{12} \\
           \b_{21} & \b_{22} \\
         \end{array}
       \right)$
be two matrices in $M_2(\mathbb{C})$ with $\ol{A}A=\ol{B}B=I_2$,
and let $*_A$ and $*_B$ be the corresponding Hopf $*$-algebra structures on $H$
determined by $A$ and $B$ as in Lemma \ref{3.2}, respectively.
Then we have the following proposition.

\begin{proposition}\label{4.10}
$*_A$ and $*_B$ are equivalent $*$-structures on $H$ if and only if there exists
an invertible matrix
$\La$ in $M_2(\mathbb{C})$ such that $A\La=\ol{\La}B$, i.e.,  $\ol{\La}^{-1}A\La=B$.
\end{proposition}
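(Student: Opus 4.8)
The plan is to unwind Definition~\ref{2.1}: by definition, $*_A$ and $*_B$ are equivalent precisely when there is a Hopf algebra automorphism $\psi$ of $H$ with $\psi(h^{*_A})=\psi(h)^{*_B}$ for all $h\in H$ (the relation ``equivalent'' being symmetric, the choice of which structure plays the role of $*'$ is immaterial). Thus the proof splits into two tasks: first, describe all Hopf algebra automorphisms of $H$ when $n=2$; second, translate the intertwining condition $\psi(h^{*_A})=\psi(h)^{*_B}$ into a matrix identity, keeping careful track of the fact that $*_A$ and $*_B$ are conjugate-linear while $\psi$ is $\mathbb C$-linear.

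For the first task I would show that the Hopf algebra automorphisms of $H$ are exactly the maps $\psi_\La$ indexed by invertible matrices $\La=(\la_{ij})\in M_2(\mathbb C)$, where $\psi_\La(g)=g$, $\psi_\La(x)=\la_{11}x+\la_{12}y$ and $\psi_\La(y)=\la_{21}x+\la_{22}y$. Indeed, if $\psi$ is any Hopf algebra automorphism, then $\psi$ maps $G(H)$ into itself; since $G(H)=\{1,g\}$ by Lemma~\ref{3.4}, $\psi(1)=1$, and $\psi$ is injective, we get $\psi(g)=g$. Then $\t(\psi(x))=(\psi\ot\psi)\t(x)=\psi(x)\ot g+1\ot\psi(x)$, so Lemma~\ref{4.6} gives $\psi(x)=\la_{11}x+\la_{12}y+\la_{13}(1-g)$ for some scalars; applying $\psi$ to the relation $xg=\omega gx$ (recall $\omega=-1$) and using that $1$ and $g$ are linearly independent forces $\la_{13}=0$, and similarly $\psi(y)=\la_{21}x+\la_{22}y$. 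The matrix $\La$ is invertible because $\psi$ preserves the $2$-dimensional subspace $\mathrm{span}\{x,y\}$ and is injective on it. Conversely, for any invertible $\La$ a direct computation --- entirely parallel to the verification in the proof of Lemma~\ref{3.2}, and using $\omega=-1$ --- shows that $g\mapsto g$, $x\mapsto\la_{11}x+\la_{12}y$, $y\mapsto\la_{21}x+\la_{22}y$ respects the defining relations $g^2=1$, $x^2=y^2=0$, $xg=\omega gx$, $gy=\omega yg$, $xy=\omega yx$ as well as the coalgebra structure, hence extends to a Hopf algebra automorphism $\psi_\La$.

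For the second task, write the $*$-structures as column-vector identities: Lemma~\ref{3.2} says $\binom{x^{*_A}}{y^{*_A}}=A\binom{x}{y}$, $\binom{x^{*_B}}{y^{*_B}}=B\binom{x}{y}$, and by construction $\binom{\psi_\La(x)}{\psi_\La(y)}=\La\binom{x}{y}$. On $g$ the condition $\psi_\La(g^{*_A})=\psi_\La(g)^{*_B}$ is automatic since $g^{*_A}=g^{*_B}=g$, and on $i$ both sides equal $-i$; since $\psi_\La\circ *_A$ and $h\mapsto\psi_\La(h)^{*_B}$ are both conjugate-linear antialgebra endomorphisms of $H$, it therefore suffices to compare them on $x$ and $y$. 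Applying the $\mathbb C$-linear map $\psi_\La$ to $\binom{x^{*_A}}{y^{*_A}}=A\binom{x}{y}$ gives $\binom{\psi_\La(x^{*_A})}{\psi_\La(y^{*_A})}=A\La\binom{x}{y}$, whereas the conjugate-linearity of $*_B$ gives $\binom{\psi_\La(x)^{*_B}}{\psi_\La(y)^{*_B}}=\ol{\La}\,B\binom{x}{y}$. Since $\{x,y\}$ is linearly independent, $\psi_\La$ realizes an equivalence between $*_A$ and $*_B$ if and only if $A\La=\ol{\La}B$. Combining with the first task: if $*_A$ and $*_B$ are equivalent, an implementing automorphism must be some $\psi_\La$ with $\La$ invertible, whence $A\La=\ol{\La}B$; conversely, any invertible $\La$ with $A\La=\ol{\La}B$ yields the Hopf algebra automorphism $\psi_\La$, which intertwines the two structures on $x$ and $y$, hence on all of $H$. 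Finally $A\La=\ol{\La}B\iff\ol{\La}^{-1}A\La=B$ because $\ol{\La}$ is invertible, with $\ol{\La}^{-1}=\ol{\La^{-1}}$.

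I expect the main obstacle to be the automorphism classification in the first task: Lemma~\ref{4.6} a priori allows a $1-g$ summand in $\psi(x)$ and $\psi(y)$, which one must exclude using the commutation relations, and one must then confirm that \emph{every} invertible $\La$ --- with no constraint beyond invertibility --- really does define an automorphism, which is exactly the place where the identity $\omega^2=1$ (i.e.\ $n=2$) is essential. Once the automorphism group is identified with $\mathrm{GL}_2(\mathbb C)$ acting on $\mathrm{span}\{x,y\}$, the remainder is bookkeeping with conjugate-linearity.
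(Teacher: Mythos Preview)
Your proposal is correct and follows essentially the same approach as the paper: identify the Hopf algebra automorphisms via Lemmas~\ref{3.4} and~\ref{4.6}, then translate the intertwining relation $\psi(h^{*_A})=\psi(h)^{*_B}$ on the generators into the matrix identity $A\La=\ol{\La}B$. Your write-up is in fact more detailed than the paper's --- you spell out the exclusion of the $(1-g)$ summand via the relation $xg=\omega gx$, and you verify that every invertible $\La$ defines an automorphism (using $\omega^2=1$), whereas the paper dispatches both points as ``a straightforward computation'' --- but the underlying argument is the same.
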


\begin{proof}
Suppose that $*_A$ and $*_B$ are equivalent. Then there exists a Hopf algebra automorphism
$\phi$ of $H$ such that $\phi(h^{*_A})=\phi(h)^{*_B}$ for all $h\in H$.
By Lemma \ref{3.4} and $n=2$, one can see that $\phi(g)=g$.
Then by Lemma \ref{4.6}, a straightforward computation shows that there exists a matrix
$\La=\left(
         \begin{array}{cc}
           \la_{11} & \la_{12} \\
           \la_{21} & \la_{22} \\
         \end{array}
       \right)$ in $M_2(\mathbb{C})$
such that $\phi(x)=\la_{11}x+\la_{12}y$
and $\phi(y)=\la_{21}x+\la_{22}y$.
Since $\phi$ is an isomorphism, one can check that $\La$ is an invertible matrix in $M_2(\mathbb C)$.
Now we have
$$\begin{array}{rl}
\phi(x^{*_A})&=\phi(\a_{11}x+\a_{12}y)=\a_{11}\phi(x)+\a_{12}\phi(y)\\
&=\a_{11}(\la_{11}x+\la_{12}y)+\a_{12}(\la_{21}x+\la_{22}y)\\
&=(\a_{11}\la_{11}+\a_{12}\la_{21})x+(\a_{11}\la_{12}+\a_{12}\la_{22})y\\
\end{array}$$
and
$$\begin{array}{rl}
\phi(x)^{*_B}&=(\la_{11}x+\la_{12}y)^{*_B}=\ol{\la_{11}}x^{*_B}+\ol{\la_{12}}y^{*_B}\\
&=\ol{\la_{11}}(\b_{11}x+\b_{12}y)+\ol{\la_{12}}(\b_{21}x+\b_{22}y)\\
&=(\ol{\la_{11}}\b_{11}+\ol{\la_{12}}\b_{21})x+(\ol{\la_{11}}\b_{12}+\ol{\la_{12}}\b_{22})y.\\
\end{array}$$
Hence it follows from $\phi(x^{*_A})=\phi(x)^{*_B}$ that
$\a_{11}\la_{11}+\a_{12}\la_{21}=\ol{\la_{11}}\b_{11}+\ol{\la_{12}}\b_{21}$
and $\a_{11}\la_{12}+\a_{12}\la_{22}=\ol{\la_{11}}\b_{12}+\ol{\la_{12}}\b_{22}$.
Similarly, from $\phi(y^{*_A})=\phi(y)^{*_B}$, one gets that
$\a_{21}\la_{11}+\a_{22}\la_{21}=\ol{\la_{21}}\b_{11}+\ol{\la_{22}}\b_{21}$
and $\a_{21}\la_{12}+\a_{22}\la_{22}=\ol{\la_{21}}\b_{12}+\ol{\la_{22}}\b_{22}$.
Thus, we have $A\La=\ol{\La}B$.

Conversely, suppose that there exists an invertible matrix
$\La=\left(
         \begin{array}{cc}
           \la_{11} & \la_{12} \\
           \la_{21} & \la_{22} \\
         \end{array}
       \right)$ in $M_2(\mathbb{C})$ such that $A\La=\ol{\La}B$.
Then it is straightforward to check that there is a Hopf algebra automorphism $\phi$ of $H$
uniquely determined by $\phi(g)=g$,  $\phi(x)=\la_{11}x+\la_{12}y$ and $\phi(y)=\la_{21}x+\la_{22}y$.
Obviously, $\phi(g^{*_A})=\phi(g)^{*_B}=g$.
From the computation above, one gets that $\phi(x^{*_A})=\phi(x)^{*_B}$ and $\phi(y^{*_A})=\phi(y)^{*_B}$.
It follows that $\phi(h^{*_A})=\phi(h)^{*_B}$ for any $h\in H$.
This shows that $*_A$ and $*_B$ are equivalent.
\end{proof}

{ \bf Acknowledgements}
This work supported by the National Natural Science Foundation of China (Grant No.11571298, 11711530703).

\end{document}